\title{Unique continuation for water waves and dispersive multiplier equations}
\abstract{
We show that if a solution to the water wave equation, for an arbitrary short time interval, is flat on an open set and the horizontal fluid velocity at the surface is zero on the same open set, then the wave must vanish everywhere for all times. In addition, we use a result from non-harmonic Fourier analysis to show that $(1+1 d)$ linear dispersive PDE with Fourier multipliers also have this unique continuation property, subject to a natural asymptotic growth condition on the multiplier symbol.}
\keywords{water waves, PDE, dispersive PDE, unique continuation}
\renewcommand{\@seccntformat}[1]{}
\newtheorem{theorem}{Theorem}[section]
\newtheorem{proposition}[theorem]{Proposition}
\newtheorem{definition}{Definition}[section]
\begin{document}
\maketitle

\section{}

\subsection{Introduction}
In this brief article we give a short, self-contained and simple proof of the unique continuation property for the water wave equations based on the Zakharov--Craig--Sulem formulation. The result says that if the water surface is flat and the horizontal velocity is zero on an open set and for an arbitrary short time, then it must be flat everywhere and for all time. The proof that water waves have this property seems to have appeared first in (\cite{kenig2020uniqueness}). This work therefore serves as an addendum to that result, but differs in that the result is obtained from the unique continuation property of the Dirichlet-to-Neumann (DN) operator. 

Moreover, we consider the Cauchy problem for linear, periodic and dispersive multiplier equations $u_t = -i\omega(D)u$, and show that their solutions has the unique continuation property under the assumption that $$\frac{|\omega(k)|}{|k|} \to \infty  \quad \text{as } \quad |k| \to \infty. $$

The proofs for the water waves system and the dispersive multiplier equations are quite different in nature. For water waves, it is the rigidity imposed on the waves by the existence of an harmonic potential governing the flow in the interior of the water that gives rise to the property. In the Zakharov--Craig--Sulem formulation of the PDE, this manifests itself by the introduction of the non-local DN operator. For the linear multiplier equations, the asymptotic condition on the dispersion relation(i.e., the multiplier) implies that the phase velocity $|c(k)| = |\omega(k)|/|k| $ is unbounded, and this is not limited to non-local operators (e.g., it is true for $\omega(D) = D^2_x$).  The condition implies that for a compactly supported initial condition, there must be Fourier modes traveling arbitrarily fast, in contrast to the behavior of hyperbolic (non-dispersive) waves. \\
\newline
There are many works on unique continuation for dispersive PDE. In (\cite{kenig2020unique}), the authors study unique continuation for a class of of non-linear, non-local dispersive PDEs and the effect of non-locality is central. In (\cite{ehrnstrom2006unique}), unique continuation for traveling waves with vorticity is considered, and in (\cite{zhang1992unique}) unique continuation for the KdV-equation is shown. For a recent overview of unique continuation results for non-linear dispersive PDE, we refer to the paper (\cite{linares2022unique}) and references therein. 

Unique continuation results are often useful in control theory and inverse problems, where they are used to established local observability of systems, or equivalently, local uniqueness from measurements (Cf. \cite{tenenbaum2009fast} or \cite{kirkeby2023feynman}). \\
\newline
In Section 1 we give a brief introduction to the water wave equations and the Craig--Sulem--Zakharov formulation and state well-posedness result. We then show the unique continuation for the DN operator and for the water wave equation. In Section 2 we introduce a multi-dimensional version of Beurling's theorem for non-harmonic Fourier series and apply it to the dispersive multiplier equation.

\subsection{1: The water wave equation}
We assume the water is an incompressible and irrotational fluid, and let $\eta(X,t)$ be the surface wave amplitude and $\Phi(X,z,t)$ the velocity potential of the fluid. We denote the domain occupied by the fluid by $\Omega_t$: 
$$ \Omega_t = \left\{ (X,z) \in \mathbf{R}^{d+1} : - H_0 + b(X) < z < \eta(X,t) \right\}. $$  
Here, $H_0$ is the average depth, $b(X)$ represents the variable bottom topography, the horizontal dimension is $d = 1,2$, and we assume $-H_0 + b(X) < 0$. We will use the notation 
$X = (x_1,x_2) $, $\nabla = (\partial_{x_1},\partial_{x_2})^T$, $\nabla_{X,z} = (\partial_{x_1},\partial_{x_2}, \partial_z)^T $ and $\Delta_{X,z} = \nabla_{X,z} \cdot \nabla_{X,z}$. Also, $\partial_\nu  = \nu \cdot \nabla_{X,z} $, where $\nu$ is the unit normal on bottom surface $-H_0 + b(X)$. The equations governing the water waves are (Cf. Ch. 5 in \cite{ablowitz2011nonlinear}). 
\begin{equation}
    \begin{cases}
        \partial_t \eta + \nabla \Phi \cdot \nabla \eta  - \partial_z \Phi = 0, \quad X \in \mathbf{R}^{d}, z = \eta,  \\
        \partial_t \Phi  + \frac{1}{2}|\nabla_{X,z} \Phi|^2 + g\eta = 0,  \quad X \in \mathbf{R}^{d}, z = \eta, \\
        \Delta_{X,z} \Phi  = 0, \quad (X,z) \in \Omega_t, \\
        \partial_{\nu}\Phi = 0, \quad \quad X \in \mathbf{R}^{d}, z = -H_0 + b(X). 
    \end{cases}
    \label{eq: 2d3d} 
\end{equation}

It is convenient to recast the free boundary problem above in terms of the velocity potential on the boundary. This transforms \eqref{eq: 2d3d} from a 2D+3D system to a 2D+2D system, at the cost of introducing the non-local DN operator, and is known as the Zakharov--Craig--Sulem formulation (Cf. \cite{waterwavesprob}). 

\begin{equation}
    \begin{cases}
    \partial_t \eta - G(\eta,b)\varphi = 0, \\
     \partial \varphi + \eta g  + \frac{1}{2}|\nabla \varphi |^2 - \frac{\left(G(\eta,b)\varphi + \nabla\varphi \cdot \nabla \eta\right)^2}{2(1 + |\nabla \eta|^2)} = 0, \\
    \left(\eta(0,X), \varphi(0,X)\right) = \left(\eta_0,\varphi_0\right).
    \end{cases}
    \label{full1}
\end{equation}
In \eqref{full1}, the DN operator $G(\eta,b)$ maps the surface potential $\varphi$ to the fluid velocity in the direction normal to the surface. It is defined as follows: 
Let $\Phi$ be the solution to the Dirichlet problem 
\begin{equation}
     \Delta \Phi = 0, \quad   \text{in } \Omega_t, \quad \Phi\big|_{z=\eta} = \varphi, \quad  \partial_\nu \Phi\big|_{-H_0+b(X)} = 0. 
\end{equation}
Then $G(\eta,b) \varphi = \sqrt{1 + |\nabla \eta|^2 } \partial_\nu \Phi\big|_{z=\eta} $. \\

Remarkably, and despite its formidable appearance, it has been showed that there exists a unique solution to the Cauchy problem for water waves under various assumptions. For an account of the different contributions see, e.g., (\cite{waterwavesprob} or \cite{alazard2014cauchy}). We rely here on the existence and uniqueness result from (\cite{alazard2014cauchy}), and give now a brief recapitulation of the result. 

\begin{itemize}
    \item The incompressible Euler equation, from which \eqref{eq: 2d3d} is derived is 
    \begin{equation*}
        \partial_t V + (V \cdot \nabla_{X,z})V  + \frac{1}{\rho_0} \nabla_{X,z}P = -g\mathbf{e}_z, \quad \nabla_{X,z} \cdot V = 0, 
    \end{equation*}
    where $V$ is the fluid velocity, $P$ is the pressure, $\rho_0$ is the mass density and $g$ is gravitational acceleration. 
    Define $a(X,t) = -\partial_z P(X,\eta(X,t),t)$. The Taylor sign condition is that there exists some constant $c > 0$ such that 
    $a(X,t) \geq c$, and is required for stability of the waves. 
    \item The water depth $\eta(X,t) - (-H_0 + b(X)) \geq h > 0$. 
    \item Define $$ B(\eta,\varphi)  = \frac{\nabla \eta \cdot \nabla \varphi - \mathcal{G}(\eta,b) \varphi}{1 +|\nabla \eta|^2}  \quad  \text{and } \quad V(\eta,\varphi) = \nabla \varphi - B(\eta,\varphi)\nabla \varphi.$$
\end{itemize}

\begin{theorem}[Thm. 1.2, \cite{alazard2014cauchy}]
    Assume $s > 1 + d/2$, and take initial data $(\eta_0,\varphi_0)$ such that 
    \begin{itemize}
    \itemindent=12pt
        \item $(\eta_0,\varphi_0) \in H^{s+1/2}(\mathbf{R}^d) \times H^{s+1/2}(\mathbf{R}^d), \quad  V_0, B_0 \in H^{s}(\mathbf{R}^d). $
        \item There is some $h > 0$ such that at $t = 0$ it holds that 
        $$\{ (X,z): X\in \mathbf{R}^d,  \eta(X,t) - h < z < \eta(X,t) \} \subset \Omega_t.$$ 
        This means the water depth is positive everywhere initially.
        \item There is some $c > 0$ such that $a(X,0) > c$, i.e., the Taylor condition holds initially.  
    \end{itemize}

    Then there is some $T > 0$ such that there exists a  unique solution $(\eta,\varphi)$ to equation \eqref{full1} that satisfy 

    $$ (\eta,\varphi) \in C([0,T]; H^{s+1/2}(\mathbf{R}^d) \times H^{s+1/2}(\mathbf{R}^d) ). $$
    \label{thm: alazard}
    \end{theorem}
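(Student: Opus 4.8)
The plan is to follow the paradifferential strategy of Alazard--Burq--Zuily. The first step is to reduce the free-boundary elliptic problem to a fixed strip $\{-1<y<0\}$ (or $\{-h<y<0\}$ when the bottom is present) by a regularizing diffeomorphism that maps $\Omega_t$ onto the strip while gaining half a derivative over the boundary regularity, and then to establish elliptic regularity for the transformed Laplace equation, whose coefficients depend on $\nabla\eta$ in a Lipschitz fashion. This yields the paralinearization of the Dirichlet--Neumann operator, schematically $G(\eta)\varphi = T_\lambda\varphi - T_{\nabla\eta}\!\cdot\!\nabla\varphi + f$, where $T$ denotes Bony's paradifferential operator, $\lambda(X,\xi)=\sqrt{(1+|\nabla\eta|^2)|\xi|^2-(\nabla\eta\cdot\xi)^2}$ is an elliptic symbol of order one, and $f$ is a smoothing remainder bounded in $H^s$ by $C(\|\eta\|_{H^{s+1/2}})\|\varphi\|_{H^{s+1/2}}$.

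The second step is the algebraic reduction of \eqref{full1} to a diagonal, symmetric quasilinear paradifferential system. One introduces Alinhac's good unknown $\omega=\varphi-T_B\eta$, which removes the most singular contribution of the Bernoulli term, paralinearizes the remaining nonlinearities, and arrives at a system for $(\eta,\omega)$ of the form
\begin{equation*}
\partial_t\eta = T_\lambda\omega - T_V\!\cdot\!\nabla\eta + \text{(l.o.t.)},\qquad \partial_t\omega = -T_a\eta - T_V\!\cdot\!\nabla\omega + \text{(l.o.t.)},
\end{equation*}
with $V$ the horizontal velocity trace and $a$ the Taylor coefficient. Conjugating by a well-chosen elliptic symmetrizer $T_p$ (one takes $p=(a/\lambda)^{1/4}$) turns this into a single equation $\partial_t u + T_V\!\cdot\!\nabla u + iT_\gamma u = F$ for a suitable unknown $u$ built from $\langle D\rangle^s(\eta,\omega)$, where $\gamma=\sqrt{a\lambda}$ is a real elliptic symbol of order $1/2$. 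This is exactly where the Taylor sign condition $a\ge c>0$ enters: it makes $\gamma$ real, so that $iT_\gamma$ is skew-adjoint up to order zero and the equation is dispersive rather than backward-parabolic (hence well-posed).

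The third step is the energy estimate. Since $T_V\!\cdot\!\nabla$ and $iT_\gamma$ are skew-adjoint modulo bounded paradifferential operators, the symbolic calculus (commutator and product estimates, $L^2$-continuity of paradifferential operators) gives
\begin{equation*}
\frac{\dd}{\dd t}\|u\|_{L^2}^2 \le \mathcal{F}\!\big(\|(\eta,\varphi)\|_{H^{s+1/2}} + \|(V,B)\|_{H^s}\big)\,\|u\|_{L^2}^2 ,
\end{equation*}
and, after checking that the depth and Taylor conditions persist on a short interval by continuity, Gronwall's inequality closes the a priori bound on a time $T>0$ depending only on the initial norms. Existence then follows by a standard scheme: regularize (Friedrichs mollifiers or a viscous term), solve the regularized problem, pass to the limit using the uniform energy bound, and recover continuity in time by a Bona--Smith argument. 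Uniqueness and continuous dependence follow from the same estimate applied to the difference of two solutions, carried out one derivative lower.

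The main obstacle is Step 1 together with the low-regularity paralinearization of $G(\eta)$: controlling the harmonic extension and its normal trace when $\eta\in H^{s+1/2}$ with $s>1+d/2$, so that $\nabla\eta$ is barely above $L^\infty$, requires sharp elliptic estimates in the flattened domain and careful paraproduct/remainder bounds, and it is precisely this that dictates the functional framework $(\eta,\varphi)\in H^{s+1/2}$ with $(V,B)\in H^s$ appearing in the statement. A secondary delicate point is verifying that the Taylor sign condition, which is assumed only at $t=0$, is propagated on $[0,T]$, using the evolution equation for $a$ together with the control already obtained on the solution.
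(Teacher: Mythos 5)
This theorem is not proved in the paper at all: it is quoted verbatim from \cite{alazard2014cauchy} and used as a black box, so there is no in-paper argument to compare against. Your sketch is a faithful roadmap of the proof actually given in that reference (flattening of the free boundary, paralinearization of the Dirichlet--Neumann operator with principal symbol $\lambda$, Alinhac's good unknown $\omega=\varphi-T_B\eta$, symmetrization to a dispersive equation with symbol $\gamma=\sqrt{a\lambda}$ of order $1/2$ where the Taylor condition enters, energy estimates, and a Bona--Smith limiting argument), and it correctly identifies the two genuinely delicate points, namely the low-regularity elliptic analysis at $s>1+d/2$ and the propagation of the Taylor sign condition. It is of course an outline rather than a proof --- none of the paradifferential estimates are actually established --- but as a blind reconstruction of the strategy behind the cited result it is accurate; minor quibble: in the schematic paralinearization the transport term should act on $\eta$ (it is $-T_V\cdot\nabla\eta$ applied together with $T_\lambda$ acting on the good unknown), not on $\varphi$ as you wrote it.
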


\subsubsection{Unique continuation for the Dirichlet-to-Neumann operator}
Unique continuation results for non-local operators have recently been studied in connection with the fractional Laplacian $(-\Delta)^s$ and related equations, and are known to be hold in various settings (\cite{garcia2020two}, \cite{ruland2015unique}). In fact, for the case of infinite depth and flat surface, the DN operator is equal to the fractional Laplacian with $s = 1/2$: Recall that $(-\Delta)^s u = \mathcal{F}^{-1}\left(|\xi|^{2s} \mathcal{F} u\right)$, where $\mathcal{F}$ denotes the Fourier transform. By taking $\widehat{\Phi}(\xi,z)$ to be the Fourier transform of $\Phi$ in the horizontal direction, it is straight forward to show that $\widehat{\mathcal{G}\varphi} = \partial_z \widehat{\Phi}(\xi,z)|_{z=0} = |\xi|\widehat{\varphi}$.   In (\cite{ghosh2020calderon}, Thm. 1.2) it is shown that for any $s \in (0,1)$ and $u \in H^{r}(\mathbf{R}^d), r\in \mathbf{R}$, if $(-\Delta )^su = u = 0$ on an open set, then $u = 0$. For $s = 1/2$, this is the result we are looking for, but restricted to infinite depth and flat surface. 

We now show that if the boundary $\eta$ and bottom $b(X)$ is sufficiently regular, and if $\mathcal{O}$ is some open set where $\eta = 0$, then it holds that 
if $\Phi|_{\mathcal{O}} = \partial_\nu \Phi_{\mathcal{O}} = 0$, then $\Phi = 0 $ everywhere. This property is well known on bounded domains, and a proof is outlined in (\cite{kenig2020uniqueness}). We give a simple proof tailored for our use here. 

\begin{proposition}
 Let $k > d/2$ and $\eta, b \in H^{k + 1}(\mathbf{R}^d)$. Assume $\varphi \in \dot{H}^{3/2}(\mathbf{R}^d)$, and let $\mathcal{G}(\eta,b)$ be the corresponding Dirichlet-to-Neumann operator. Let $\mathcal{O} \subset \mathbf{R}^d$ be an open set, and assume $\eta|_\mathcal{O} = 0$. Then 
 $$ \varphi|_\mathcal{O} = \mathcal{G} (\eta,b)\varphi \big|_\mathcal{O} = 0 \quad \text{implies} \quad \varphi = 0.$$
 \label{thm: unique cont d2n}
\end{proposition}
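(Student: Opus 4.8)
The plan is to reduce the statement to a standard unique continuation result for the Laplacian on the fixed domain $\Omega$ determined by $\eta$ and $b$. First I would note that, by the definition of the Dirichlet-to-Neumann operator, $\Phi$ is the harmonic extension of $\varphi$ into $\Omega_t$ satisfying the Neumann condition $\partial_\nu \Phi = 0$ on the bottom $z = -H_0 + b(X)$, and the regularity hypotheses $\eta, b \in H^{k+1}(\mathbf{R}^d)$ with $k > d/2$ together with $\varphi \in \dot H^{3/2}$ guarantee (via the mapping properties of $\mathcal G(\eta,b)$, e.g. as in \cite{waterwavesprob}) that $\Phi$ has enough Sobolev regularity near the top boundary for the trace and co-normal trace to make sense and for the argument below to go through. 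The hypothesis $\eta|_{\mathcal O} = 0$ means the top boundary is the flat piece $\{z = 0, X \in \mathcal O\}$, on which we are given $\Phi|_{z=0} = \varphi|_{\mathcal O} = 0$ and, since on the flat part $\mathcal G(\eta,b)\varphi = \partial_z\Phi|_{z=0}$, also $\partial_z \Phi|_{z=0} = 0$ there. So $\Phi$ has vanishing Cauchy data on the open subset $\{z=0\}\times\mathcal O$ of $\partial\Omega_t$.

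Next I would invoke unique continuation from the boundary (Cauchy data): a harmonic function in a connected domain whose Cauchy data vanish on an open subset of a $C^{1,1}$ (or Lipschitz, which suffices since $\eta,b \in H^{k+1}$ with $k>d/2$ embeds into $C^{1,\alpha}$) portion of the boundary vanishes identically. Concretely, one extends $\Phi$ across the flat patch by $0$ — i.e. set $\tilde\Phi = \Phi$ for $z < 0$ over $\mathcal O$ and $\tilde\Phi = 0$ for $z \ge 0$ over $\mathcal O$ — and checks that $\tilde\Phi$ is harmonic in the enlarged open set because both the Dirichlet and Neumann data match across the interface $\{z=0\}\times\mathcal O$; then $\tilde\Phi$ vanishes on a nonempty open subset of a connected open set in which it is harmonic, so by the strong unique continuation property (real-analyticity of harmonic functions, plus connectedness) $\tilde\Phi \equiv 0$, hence $\Phi \equiv 0$ in $\Omega_t$. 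Since $\Omega_t$ is connected (it is a graph domain between two graphs) this propagates the vanishing to all of $\Omega_t$, and taking the trace at $z = \eta$ gives $\varphi = \Phi|_{z=\eta} = 0$ identically.

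The main obstacle is making the even/odd reflection argument rigorous at the required low regularity: one must verify that the zero-extension $\tilde\Phi$ is genuinely a (weak, hence by elliptic regularity classical and real-analytic) solution of Laplace's equation across $\{z=0\}\times\mathcal O$, which requires that the co-normal trace $\partial_z\Phi|_{z=0^-}$ exist in a strong enough sense and equal zero — this is where $\varphi \in \dot H^{3/2}$ and the smoothness of $\eta, b$ are used, since they place $\Phi$ in $H^2_{loc}$ up to the flat boundary patch. An alternative, avoiding reflection, is to cite directly a boundary unique continuation theorem for second-order elliptic operators with vanishing Cauchy data on an open boundary piece (as outlined in \cite{kenig2020uniqueness}); I would mention this as the clean route and use reflection only to keep the argument self-contained in the flat-boundary case that is actually needed here.
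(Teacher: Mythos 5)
Your proposal is correct and follows the same overall skeleton as the paper: observe that on the flat patch $\{z=0\}\times\mathcal{O}$ the harmonic extension $\Phi$ has vanishing Cauchy data (using $\eta|_{\mathcal O}=0$ so that $\mathcal G(\eta,b)\varphi|_{\mathcal O}=\partial_z\Phi|_{z=0}$), deduce $\Phi=0$ near the patch, propagate by interior unique continuation for harmonic functions on the connected graph domain $\Omega_t$, and take the trace to get $\varphi=0$. The one genuine difference is how the local step is justified: the paper cites an off-the-shelf boundary unique continuation theorem (vanishing Cauchy data on an open Lipschitz portion of the boundary of a bounded subdomain implies vanishing there, applied to a small ball-shaped subdomain meeting the surface inside $\mathcal O$), whereas you give a self-contained zero-extension argument --- extend $\Phi$ by $0$ across the flat patch, check weak harmonicity of the extension via integration by parts (the boundary terms vanish because both traces do), upgrade to real-analyticity by Weyl's lemma, and conclude by the identity theorem. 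Your route is more elementary and exploits the flatness of the patch, which is exactly the situation at hand; the paper's route is shorter and would also cover a non-flat (merely Lipschitz) patch, though that generality is not needed here. You correctly identify the regularity bookkeeping ($\Phi\in H^2$ near the patch so the co-normal trace exists and the integration by parts is licit) as the point that needs care, and this is supplied by the same elliptic regularity result (Proposition 2.44 of the cited water-waves monograph) that the paper invokes.
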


\begin{proof}
    We first recall some results on harmonic functions. 
\hspace{15pt} 
\begin{itemize}[leftmargin=0.8cm]
    \item Let $\Omega$ be an open set in $\mathbf{R}^d$ and assume u satisfies $\int_\Omega u \Delta g \mathrm{d}x$ for all $g \in C^\infty_0(\Omega).$ Then $u$ has a harmonic representation in $\Omega$ (Ch. 2.18 in \cite{dagmar2018laplace}). 
    \item Let $\Omega$ be a bounded domain with Lipschitz boundary $\partial \Omega$ in $\mathbf{R}^d$, and let $\omega$ be an open subset of $\partial \Omega$. If $u \in H^2(\Omega)$ is harmonic and $ u = \partial_\nu u = 0$ on $\omega$, then $u = 0$ in $\Omega$ (Ch. 2.3 in \cite{choulli2016applications}). 
    \item Let $\Omega$ be an open set in $\mathbf{R}^d$ and assume u is harmonic in $\Omega$. Let $\mathcal{O}$ be an open subset of $\Omega$. If $u = 0$ in $\mathcal{O}$ then $u = 0$. 
\end{itemize}
Consider now the problem for the velocity potential. 
\begin{equation}
    \begin{cases}
     \Delta_{X,z} \Phi = 0, \quad   \text{in } \Omega_t, \\
     \Phi = \varphi, \quad \text{on } \eta, \\
     \partial_{\nu}\Phi  = 0, \quad \text{on } -H_0+b(X).  
    \end{cases}
    \label{eq: Phi}
\end{equation}
From proposition 2.44 in (\cite{waterwavesprob}), we know that for $k > d/2$ and $\eta,b \in H^{k+1}(\mathbf{R}^d)$ and $\varphi \in H^{3/2}(\mathbf{R}^d)$ there exist a unique solution $\Phi \in H^2(\Omega_t)$ to  \eqref{eq: Phi}, and by $i)$ above  $\Phi$ is harmonic in $\Omega_t$. 

Now, let $\mathcal{O} \subset \mathbf{R}^d$ be open, and assume $\eta|_\mathcal{O} = 0$. Then $\mathcal{G}(b,\eta)\varphi \big|_\mathcal{O} = \partial_z \Phi|_{X \in \mathcal{O},z = 0}$. Let $B_r$ be a ball of radius $r$ such that its intersection with $\eta $ is contained in $\mathcal{O}$ and that $B_t = B_r \cup \mathcal{O}_t$ is open. By assumption $\varphi|_{\mathcal{O}} = \mathcal{G}(b,\eta)\varphi \big|_\mathcal{O} = 0$.
Since $\Phi$ is harmonic in the domain $B_t$ and since $\partial B_t$ is Lipschitz, it follows that $\Phi = \partial_\nu \Phi = 0$ on $\partial B_t \cup \mathcal{O}$, and so $\Phi = 0$  in $B_t$. By the third property, this implies that $\Phi$ vanishes everywhere, and so $\varphi = \Phi\big|_{ z= \eta} = 0$.
    
\end{proof}

\subsubsection{Unique continuation for the water waves}

The unique continuation for water waves is now a simple consequence of applying Proposition \ref{thm: unique cont d2n} to \eqref{full1}. 

\begin{definition}
We say that the water waves system $(\eta,\varphi)$ is \emph{at rest} on an open set $\mathcal{O} \subset \mathbf{R}^d$ at time $t$ if $$(\eta(X,t), \nabla_X \varphi(X,t)) = 0 \quad \text{for } X \in \mathcal{O}.$$
\end{definition}

\begin{proposition}
Let $(\eta,\varphi)$ be the solution to the initial value problem \eqref{full1} satisfying the hypothesis of Theorem \ref{thm: alazard}. Let $\mathcal{O} \subset \mathbf{R}^d$ be an open set and assume $(\eta,\varphi)$ is at rest on $\mathcal{O}$ for some arbitrary time interval $I = (t_0,t_1) \subset [0,T]$. Then $(\eta,\varphi) = 0$. 
\end{proposition}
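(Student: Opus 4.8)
The plan is to reduce the statement to Proposition~\ref{thm: unique cont d2n} applied slice by slice in time, and then to close with the uniqueness assertion of Theorem~\ref{thm: alazard}. First I would fix a connected component of $\mathcal{O}$ — still open, and still denoted $\mathcal{O}$ — so that the locally constant function $\varphi$ is genuinely constant there. On $\mathcal{O}\times I$ we have $\eta\equiv 0$, hence also $\nabla\eta\equiv 0$ and $\partial_t\eta\equiv 0$ on $\mathcal{O}\times I$, while $\nabla_X\varphi\equiv 0$ says exactly that $\varphi(\cdot,t)$ is constant on $\mathcal{O}$ for each $t\in I$; since $\varphi(\cdot,t)$ is an element of $\dot H^{3/2}(\mathbf{R}^d)$ (the space is quotiented by constants, and $H^{s+1/2}\hookrightarrow\dot H^{3/2}$ for $s>1+d/2$), this is the same as $\varphi(\cdot,t)|_{\mathcal{O}}=0$ after choosing the natural representative. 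Evaluating the first equation of \eqref{full1} on $\mathcal{O}$ then gives $\mathcal{G}(\eta,b)\varphi|_{\mathcal{O}}=\partial_t\eta|_{\mathcal{O}}=0$ for every $t\in I$.

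Next, freezing $t\in I$, the hypotheses of Theorem~\ref{thm: alazard} guarantee that $\Omega_t$ has positive depth and that $\eta(\cdot,t),b\in H^{k+1}(\mathbf{R}^d)$ for some $k>d/2$ with $\varphi(\cdot,t)\in\dot H^{3/2}(\mathbf{R}^d)$, so Proposition~\ref{thm: unique cont d2n} applies with this open set $\mathcal{O}$: from $\eta(\cdot,t)|_{\mathcal{O}}=\varphi(\cdot,t)|_{\mathcal{O}}=\mathcal{G}(\eta,b)\varphi(\cdot,t)|_{\mathcal{O}}=0$ one concludes $\varphi(\cdot,t)=0$ in $\dot H^{3/2}$, i.e. $\nabla_X\varphi(\cdot,t)\equiv 0$ on all of $\mathbf{R}^d$. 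This holds for every $t\in I$. Feeding $\nabla\varphi\equiv 0$ back into \eqref{full1} on $\mathbf{R}^d\times I$ and using that a spatially constant surface potential has vanishing Dirichlet-to-Neumann data (the constant extension solves the harmonic problem, so $\mathcal{G}(\eta,b)\varphi\equiv 0$), the first equation yields $\partial_t\eta\equiv 0$, and all the quadratic terms in the second equation vanish, leaving $\partial_t\varphi=-g\eta$. Since the left side here is independent of $X$ while $\eta(\cdot,t)$ must then also be independent of $X$, and $\eta|_{\mathcal{O}}=0$, we get $\eta\equiv 0$ on $\mathbf{R}^d\times I$, hence $\partial_t\varphi\equiv 0$ as well.

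Thus at any chosen $t^{\ast}\in I$ the solution carries the trivial Cauchy data $(\eta(\cdot,t^{\ast}),\varphi(\cdot,t^{\ast}))=(0,\mathrm{const})$, which coincides in the relevant function spaces with the data of the zero solution. Applying the uniqueness part of Theorem~\ref{thm: alazard} forward in time from $t^{\ast}$, and backward after the time reversal $(\eta(\cdot,t),\varphi(\cdot,t))\mapsto(\eta(\cdot,t^{\ast}-t),-\varphi(\cdot,t^{\ast}-t))$ — which a direct check shows maps solutions of \eqref{full1} to solutions — propagates $(\eta,\varphi)=0$ to the entire interval of existence $[0,T]$.

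The step needing the most care is the bookkeeping with the homogeneous space $\dot H^{3/2}$: the hypothesis supplies only $\nabla_X\varphi=0$ on $\mathcal{O}$, not $\varphi=0$, so one must check that the normalization ``$\varphi|_{\mathcal{O}}=0$'' used to invoke Proposition~\ref{thm: unique cont d2n} is compatible with the global normalization, and that the leftover additive constants do not obstruct the closing uniqueness argument. The other point worth stating explicitly is the time-reversibility of \eqref{full1}, which is exactly what allows the vanishing to be pushed backwards from $t^{\ast}$ to $t=0$; everything else is a direct substitution into the equations.
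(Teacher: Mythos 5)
Your proof is correct and follows essentially the same route as the paper: use the first equation of \eqref{full1} to get $\mathcal{G}(\eta,b)\varphi|_{\mathcal{O}}=0$, invoke Proposition \ref{thm: unique cont d2n} to conclude $\varphi$ is constant (hence zero in $H^{s+1/2}$), and then read off $\eta=0$ from the second equation. You are in fact more careful than the paper on two points it leaves implicit --- the normalization of the additive constant in $\varphi$ before applying the Dirichlet-to-Neumann unique continuation, and the propagation of the vanishing from the interval $I$ to all of $[0,T]$ via uniqueness and time reversal.
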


\begin{proof}
Take $Q_\epsilon = B_\epsilon \times I_\epsilon \subset \mathcal{O} \times I$ to be a space-time cylinder contained in $\mathcal{O} \times I$. 
Then $\partial_t \eta |_{Q_\epsilon} = 0 $, and so $\mathcal{G}(b,\eta) \varphi \big|_{Q_\epsilon} = 0$ by equation 1 in \eqref{full1}. 
Moreover, by the assumption that $\nabla_X \varphi = 0$ and equation 2 in \eqref{full1}, we see that $\varphi$ must be constant on $Q_\epsilon$. 
It follows now that $\Phi$ is constant: when $\Phi$ is harmonic and constant $(\Phi=C)$ on some open set, the difference $\Phi - C$ is zero on the open set and hence vanish everywhere and so $\Phi = C$. Therefore, by proposition \ref{thm: unique cont d2n}, $\Phi = C$ for some constant $C$. But this implies $\varphi = C$, and since $\varphi \in H^{s + 1/2}(\mathbf{R}^d)$, $\varphi = 0$. Consequently, $\eta$ is also constant, and by the same argument $\eta = 0$.   
\end{proof}

As a corollary, we also get the following result. 

\begin{proposition}
    Let $\mathcal{O} \subset \mathbf{R}^d$ be an open set and assume at some instant $t_1 \in [0,T]$  $(\eta,\varphi)$ is at rest on $\mathcal{O}$ and that in addition $\partial_t \eta(t_1,X)|_\mathcal{O} = 0$. Then $(\eta,\varphi) = 0$.
\end{proposition}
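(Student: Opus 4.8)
The plan is to reduce the statement to the assertion that the Cauchy data vanishes at the single time $t_1$, that is $(\eta(t_1,\cdot),\varphi(t_1,\cdot))=(0,0)$, and then to close by uniqueness: the zero pair is a global solution of \eqref{full1}, the Taylor and positive-depth conditions persist on $[0,T]$, and \eqref{full1} is invariant under $(t,\varphi)\mapsto(-t,-\varphi)$, so Theorem \ref{thm: alazard} may be invoked forward from $t_1$ and, after the time-reversal, backward as well; hence $(\eta,\varphi)\equiv 0$ on $[0,T]$ as soon as it vanishes at $t_1$. So the work is entirely at the frozen time $t_1$.

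First I would extract from the three hypotheses at $t_1$ exactly the input required by Proposition \ref{thm: unique cont d2n}. By the first equation of \eqref{full1}, $\partial_t\eta(t_1,\cdot)|_{\mathcal O}=\mathcal G(\eta,b)\varphi(t_1,\cdot)|_{\mathcal O}$, so the extra assumption $\partial_t\eta(t_1,\cdot)|_{\mathcal O}=0$ gives $\mathcal G(\eta,b)\varphi(t_1,\cdot)|_{\mathcal O}=0$. Since $\nabla_X\varphi(t_1,\cdot)|_{\mathcal O}=0$, the restriction $\varphi(t_1,\cdot)|_{\mathcal O}$ equals a constant $C$ on a fixed component of $\mathcal O$; because the harmonic extension of the constant $C$ is $C$ itself and has vanishing normal derivative, $\mathcal G(\eta,b)C=0$, so $\psi:=\varphi(t_1,\cdot)-C$ satisfies $\psi|_{\mathcal O}=\mathcal G(\eta,b)\psi|_{\mathcal O}=0$ while $\eta|_{\mathcal O}=0$. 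Proposition \ref{thm: unique cont d2n} then yields $\psi\equiv 0$, i.e. $\varphi(t_1,\cdot)\equiv C$, and membership in $H^{s+1/2}(\mathbf R^d)$ forces $C=0$. Hence $\varphi(t_1,\cdot)\equiv 0$, and the first equation of \eqref{full1} then gives $\partial_t\eta(t_1,\cdot)\equiv 0$ on all of $\mathbf R^d$.

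It remains to prove $\eta(t_1,\cdot)\equiv 0$, and this is the step I expect to be the main obstacle. With $\varphi(t_1,\cdot)\equiv 0$ the second equation of \eqref{full1} collapses to $\partial_t\varphi(t_1,\cdot)=-g\,\eta(t_1,\cdot)$, and differentiating the first equation in $t$ and using the linearity of $\varphi\mapsto\mathcal G(\eta,b)\varphi$ (so that the shape-derivative term drops at $t_1$, since $\varphi(t_1,\cdot)\equiv 0$) gives $\partial_t^2\eta(t_1,\cdot)=\mathcal G(\eta,b)\partial_t\varphi(t_1,\cdot)=-g\,\mathcal G(\eta,b)\eta(t_1,\cdot)$. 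If at this point $\partial_t^2\eta(t_1,\cdot)|_{\mathcal O}=0$ is available, then $\mathcal G(\eta,b)\eta(t_1,\cdot)|_{\mathcal O}=0$, and together with $\eta(t_1,\cdot)|_{\mathcal O}=0$ a second application of Proposition \ref{thm: unique cont d2n} --- now to $\eta(t_1,\cdot)$ itself --- gives $\eta(t_1,\cdot)\equiv 0$, completing the proof. The genuine difficulty is therefore supplying the vanishing of $\partial_t^2\eta$ on $\mathcal O$ at $t_1$: in the interval version flatness on a space--time cylinder makes all time derivatives of $\eta$ on $\mathcal O$ vanish for free, whereas a single-instant hypothesis does not obviously do so, since $\partial_t^2\eta(t_1,\cdot)$ depends through the nonlocal operator $\mathcal G(\eta,b)$ on the \emph{global} profile $\eta(t_1,\cdot)$. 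I would expect this to be handled either by reading the extra assumption as also fixing one further time derivative of $\eta$ to zero on $\mathcal O$, or by first showing that the prescribed data already propagates the ``at rest'' condition to a short interval around $t_1$ and then quoting the previous proposition.
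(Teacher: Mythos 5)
Your first half is exactly the route the paper has in mind (it states this result without proof, as a corollary of the interval version): the extra hypothesis $\partial_t\eta(t_1,\cdot)|_{\mathcal O}=0$ together with the first equation of \eqref{full1} replaces the information that the space--time cylinder supplied before, giving $\mathcal G(\eta,b)\varphi(t_1,\cdot)|_{\mathcal O}=0$; the at-rest condition makes $\varphi(t_1,\cdot)$ constant on a component of $\mathcal O$, and applying Proposition \ref{thm: unique cont d2n} to $\varphi(t_1,\cdot)-C$ (noting $\mathcal G(\eta,b)C=0$) plus the decay from $H^{s+1/2}$ gives $\varphi(t_1,\cdot)\equiv 0$. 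Your closing step, forward/backward uniqueness from $t_1$ via the symmetry $(t,\varphi)\mapsto(-t,-\varphi)$, is also correct and is more careful than the paper, which is silent on the time-propagation issue.

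The obstacle you flag in your last paragraph is, however, a genuine gap, and it cannot be closed from the stated hypotheses: the step from $\varphi(t_1,\cdot)\equiv 0$ to $\eta(t_1,\cdot)\equiv 0$ fails, and in fact the proposition as written has a counterexample. Take data at $t_1$ with $\varphi(t_1,\cdot)\equiv 0$ and $\eta(t_1,\cdot)$ a small, smooth, nonzero profile vanishing on $\mathcal O$ (a momentarily still surface with a bump away from $\mathcal O$); for small data the depth and Taylor conditions hold, so Theorem \ref{thm: alazard} provides a solution. Then $\eta(t_1,\cdot)|_{\mathcal O}=0$, $\nabla_X\varphi(t_1,\cdot)\equiv 0$, and $\partial_t\eta(t_1,\cdot)=\mathcal G(\eta(t_1),b)\varphi(t_1,\cdot)\equiv 0$, so every hypothesis is satisfied, yet the solution is not zero. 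This shows that in the interval version the vanishing of $\partial_t\varphi$ (hence of $\eta$, by the second equation) on the cylinder is essential extra information that a single-instant hypothesis does not reproduce: at one instant one only gets $\partial_t\varphi(t_1,\cdot)=-g\,\eta(t_1,\cdot)$ globally, which is vacuous outside $\mathcal O$. Your computation $\partial_t^2\eta(t_1,\cdot)=-g\,\mathcal G(\eta(t_1),b)\eta(t_1,\cdot)$ (with the shape-derivative term dropping because $\varphi(t_1,\cdot)\equiv 0$) is correct and shows that adding the hypothesis $\partial_t^2\eta(t_1,\cdot)|_{\mathcal O}=0$, equivalently $\mathcal G(\eta(t_1),b)\eta(t_1,\cdot)|_{\mathcal O}=0$, does close the argument by a second application of Proposition \ref{thm: unique cont d2n}. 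Your alternative suggested fix---propagating the at-rest state to a short time interval and quoting the preceding proposition---cannot work, as the same example shows: $\partial_t^2\eta(t_1,\cdot)$ need not vanish on $\mathcal O$, so the surface does not remain flat there for $t$ near $t_1$. In short, your partial proof is sound as far as it goes, and the missing step is a defect of the statement (it needs either the second-time-derivative hypothesis or the interval hypothesis), not of your argument.
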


These results seem both natural and unnatural: on the one hand, it seems plausible that for a fully developed and chaotic wave field, like the ocean during a storm or a pool full of kids playing around, it would be very rare to find a small patch of the surface completely at rest. It is in a sense the intuition for incompressibility; if one part of a fluid moves, then the rest has to move too. On the other hand, a local perturbation of a surface at rest seemingly causes waves with finite speed of propagation (at least when considering gravitational effects only) and therefore should not have this kind of unique propagation property.  

\section{}
\subsection*{2: Unique continuation for linear dispersive equations}

We now consider linear dispersive PDE in one spatial dimension with periodic boundary conditions. In this setting we can derive unique continuation results directly from the dispersion relation. 
For a continuous function $\omega(k): \mathbb{R} \to \mathbb{R}$ we define the periodic Fourier multiplier $$\omega(D)u(x) = \sum_{k\in \mathbb{Z}} \omega(k) \hat{u}_k e^{ikx},$$ 
where $\hat{u}_k$ are the Fourier coefficients of $u$. For a given $\omega$, we consider the PDE 
\begin{equation}
\begin{cases}
    u_t = -i\omega(D)u, \quad x \in (0,2\pi), t > 0, \\
    u(x,0) = g(x), u(0,t) = u(2\pi,t). 
\end{cases}
\label{lin disp}
\end{equation}
We assume that there exists some $m > 0$ such that for $$|d^n_k\omega(k)| \leq C_n (1 + k^2)^{(m-n)/2}  \quad \text{for all } k \in \mathbb{R}.$$ 
Then, for initial data $g \in L^2(0,2\pi)$ the unique solution to \eqref{lin disp} is 
\begin{equation}
    u(x,t) = \sum_{k\in \mathbb{Z}}  \hat{g}_k e^{i(kx - \omega(k)t)}. 
\end{equation}
See, e.g., (\cite{craig2018course}) for a good introduction and well-posedness analysis of such equations. 
The equation \eqref{lin disp} is called dispersive if $\omega(k)$ is real valued and $\omega''(k) \neq 0$ (Cf. Ch. 2, \cite{ablowitz2011nonlinear}). For example, $\omega(k) = k^2$ corresponds to the free-space Schrödinger equation, $\omega(k) = -k^3$ to the linearized KdV equation and $\omega(k) = \sqrt{(gk + Sk^3)\tanh(kH)}$ to linear gravity-capillary waves in water of depth $H$. 
In the two first instances, $\omega$ is a local operator, while for the gravity-capillary waves it is non-local. 

The following proposition shows that solutions to \eqref{lin disp} have the unique continuation property under rather mild and natural assumptions on the dispersion relation. 

\begin{proposition} Assume that $\omega(k)$ is dispersive and satisfies 
$$ \frac{|\omega(k)|}{|k|} \to \infty \quad \text{as} \quad  |k| \to \infty.$$
Let $u$ be the solution to \eqref{lin disp}, and let $\mathcal{O} \in (0,2 \pi) \times \{ t \in \mathbb{R}: t >0\}$ be an open set. 
Then 
$$ u(x,t)|_{\mathcal{O}} = 0 \quad \text{implies} \quad  u = 0. $$
\label{ucp_lin}
\end{proposition}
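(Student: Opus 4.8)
The plan is to read the solution formula
\[
u(x,t)=\sum_{k\in\mathbb{Z}}\hat g_k\,e^{i(kx-\omega(k)t)}
\]
not as a Fourier series in $x$ but as a \emph{non-harmonic} Fourier series in the two variables $(x,t)\in\mathbb{R}^{2}$, with coefficients $(\hat g_k)_{k\in\mathbb{Z}}$ and frequency set
\[
\Lambda=\bigl\{\,(k,-\omega(k)) : k\in\mathbb{Z}\,\bigr\}\subset\mathbb{R}^{2}.
\]
The hypothesis $u|_{\mathcal O}=0$ then says that this series vanishes on a nonempty open subset of $\mathbb{R}^{2}$, and the statement reduces to showing that $\Lambda$ is a set of uniqueness for such series, i.e.\ that $\hat g_k=0$ for every $k$, whence $u\equiv 0$. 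This is exactly the content of the multidimensional version of Beurling's theorem announced in the introduction: a non-harmonic Fourier series in $\mathbb{R}^{d}$ whose frequencies form a uniformly separated set of zero (upper Beurling) density cannot vanish on a nonempty open set without being trivial. So the proof comes down to verifying two properties of $\Lambda$.

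First, separation: if $k\neq k'$ then $\lvert (k,-\omega(k))-(k',-\omega(k'))\rvert\geq\lvert k-k'\rvert\geq 1$, since the first coordinates are distinct integers; thus $\Lambda$ is $1$-separated, and this uses nothing about $\omega$. Second --- and here the hypotheses enter --- $\Lambda$ must meet the density condition of the theorem. The asymptotic growth $\lvert\omega(k)\rvert/\lvert k\rvert\to\infty$ is decisive: for every slope $a$ one has $\lvert\omega(k)+ak\rvert\geq\bigl(\lvert\omega(k)\rvert/\lvert k\rvert-\lvert a\rvert\bigr)\lvert k\rvert\to\infty$, so only finitely many points of $\Lambda$ lie in a strip of any fixed width about any line; equivalently $\Lambda$ escapes to infinity along a curve of ever increasing steepness, and the number of its points in a ball of radius $R$ is $o(R)$. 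The dispersivity assumption $\omega''\neq 0$ --- which forces $\omega''$ to keep a fixed sign, hence $\omega$ to be globally convex or concave and $\lvert\omega'(k)\rvert\to\infty$ --- together with the symbol bounds on the derivatives of $\omega$ is what lets one upgrade this to the \emph{uniform} density statement the theorem requires, and it also guarantees that the series converges in a space (namely $C(\mathbb{R}_t;L^{2}_x)$, hence in $L^{2}_{\mathrm{loc}}$ of the space-time domain) in which ``vanishing on $\mathcal O$'' is meaningful and to which the theorem applies. This is sharp: for a linear, non-dispersive symbol $\omega(k)=ck$ the whole of $\Lambda$ sits in a bounded-width strip about one line, the density hypothesis fails, and indeed unique continuation fails, since a profile supported on a short arc of the torus gives a travelling-wave solution $u(x,t)=F(x-ct)$ that vanishes on an open space-time set --- the classical hyperbolic picture mentioned in the introduction.

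With separation and the density bound established, the multidimensional Beurling theorem applies and forces $\hat g_k=0$ for all $k\in\mathbb{Z}$, so $u\equiv 0$. The main obstacle is the second step: pinning down the exact notion of density demanded by the multidimensional uniqueness theorem and checking, from $\lvert\omega(k)\rvert/\lvert k\rvert\to\infty$, the sign condition on $\omega''$, and the symbol estimates, that $\Lambda$ satisfies it \emph{uniformly throughout frequency space}, so that an arbitrarily small open set $\mathcal O$ still suffices. The rest is routine: the solution formula, the elementary separation estimate, and --- should one prefer to run the argument one variable at a time, first in $t$ and then in $x$, in place of the two-dimensional theorem --- the remark that two exponentials $e^{ikx}$ and $e^{ik'x}$ with $k\neq k'$ are linearly independent on any set of positive measure.
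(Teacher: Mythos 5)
Your proposal takes exactly the paper's route: read $u$ as a non-harmonic Fourier series on $\mathbb{R}^2$ with frequency set $\Lambda=\{(k,-\omega(k)):k\in\mathbb{Z}\}$, check uniform separation, and invoke the multidimensional Beurling-type theorem to conclude that a nonempty open set already controls $\sum_k|\hat g_k|^2$. The separation step is done correctly and is identical to the paper's. The gap is in the density step, which is the entire substance of the argument and which you yourself flag as ``the main obstacle'': neither of the two facts you actually establish --- that a fixed-width strip about any fixed line contains finitely many points of $\Lambda$, and that $\#(\Lambda\cap B_R(0))=o(R)$ --- is the hypothesis of the theorem, which requires that $N(r)=\sup_{|x|\to\infty}\#(\Lambda\cap B_r(x))$ satisfy $N(r)/r\to 0$. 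A count that is finite for each individual line, or sublinear for balls centered at the origin, does not by itself control the supremum over far-away centers. The mechanism you name in passing --- convexity together with $|\omega(k)|/|k|\to\infty$ forces $|\omega'(k)|\to\infty$ --- is indeed the right one and is exactly what the paper's proof runs on: for fixed $r$, the points of $\Lambda$ in $B_r(x)$ have second coordinates confined to an interval of length $2r$ while, by the mean value theorem, consecutive values of $\omega(k)$ on the relevant range differ by at least $\inf|\omega'|$, which blows up as $|x|\to\infty$; hence the count in $B_r(x)$ is eventually bounded by an absolute constant and $N(r)/r\to 0$. To make your proof complete you must write out this mean-value (or the paper's annulus) estimate; as it stands the density condition is asserted rather than verified.

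Two smaller points. Your travelling-wave example $\omega(k)=ck$, for which $\Lambda$ sits in a strip, the density condition fails, and unique continuation genuinely fails, is a nice sharpness check that the paper does not include. On the other hand, the ``one variable at a time'' alternative you sketch at the end does not work as stated: $\mathcal{O}$ need not be a product set, and eliminating $t$ first would require a one-dimensional Ingham/Beurling inequality for the frequencies $\{\omega(k)\}$, which need not be separated (e.g.\ $\omega(k)=k^2$ has $\omega(k)=\omega(-k)$); the separation of $\Lambda$ lives entirely in the $k$-coordinate, which is precisely why the two-dimensional theorem is the right tool.
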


To prove the above result, we will rely on a multi-dimensional version of Beurling's theorem for non-harmonic Fourier series (Cf. (\cite{young2001introduction}) for an introduction to non-harmonic Fourier series, and (\cite{baiocchi2002ingham}) for an introduction to Beurling's result.). We first introduce some terminology. 
\begin{itemize}
    \item Let $\Lambda = \{\lambda_m\}_{m\in I} \subset \mathbb{R}^n$ be a countable set of points. 
    \item The set $\Lambda$ is said to regular if $$ \underset{\substack{m,n \in I \\ m\neq n}}\inf|\lambda_m - \lambda_n| = \gamma > 0.$$
    \item Let $\{a_m\} \in \ell^2$ and define the function 
    $$f(x) = \sum_{m \in I}a_m e^{i \lambda_m \cdot x}.$$
    We then say that a domain $D \subset \mathbb{R}^n$ is a domain associated with $f$ if there exists constant $0 < d_- \leq d_+$ such that 

    $$ d_-\sum_{m \in I}|a_m|^2 \leq \int_D |f(x)|^2 \mathrm{d}x \leq d_+ \sum_{m \in I}|a_m|^2.$$
\end{itemize}

Clearly, if we can show that any bounded open set $\mathcal{O} \subset \mathbb{R}^n$ is a domain associated with such $f$, then we have shown the unique continuation property. To this end, we will utilize the following, multidimensional version of Beurling's theorem, characterizing such domains based on $\Lambda$. 

\begin{theorem}(Theorem 8.4.5, \cite{observation})
Let $\Lambda$ be a regular sequence and $B_r(x)$ be a ball with radius $r$ centered at $x$. Define 
$$ N(r) = \sup_{|x| \to \infty}  \# (\Lambda \cap B_r(x) ),$$ 
where $\# (\Lambda \cap B_r(x) )$ is the number of elements in the set $(\Lambda \cap B_r(x) )$.  
If $$N(r)/r \to 0 \quad  \text{as} \quad r \to \infty,$$ then any open set $D \subset \mathbb{R}^n$ is a domain associated with $\Lambda$. 
\label{tucsnak}
\end{theorem}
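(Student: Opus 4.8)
The plan is to read Theorem~\ref{tucsnak} as the assertion that, for a bounded open set $D$ (boundedness being needed for the upper inequality to hold at all), the exponentials $\{e^{i\lambda_m\cdot x}\}_{m\in I}$ form a Riesz sequence in $L^2(D)$ with frame bounds $d_\pm$ independent of $D$, and to prove the two inequalities after two reductions. For the upper bound, monotonicity of $\int_D|f|^2$ in $D$ lets us enclose $D$ in a cube $Q$ and bound $\int_Q|f|^2$ instead. For the lower bound it suffices to treat a single ball $B_\rho(0)$ for each fixed $\rho>0$: a nonempty open $D$ contains some $B_\rho(y_0)$, and since $f(x+y_0)=\sum_m(a_m e^{i\lambda_m\cdot y_0})e^{i\lambda_m\cdot x}$ with $|a_m e^{i\lambda_m\cdot y_0}|=|a_m|$, a lower bound valid on $B_\rho(0)$ for every coefficient sequence gives $\int_D|f|^2\ge\int_{B_\rho(y_0)}|f|^2=\int_{B_\rho(0)}|f(\cdot+y_0)|^2\ge d_-\sum_m|a_m|^2$.

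The upper bound uses only the regularity (separation) of $\Lambda$. Choosing $g\in\mathcal{S}(\mathbb{R}^n)$ with $g\ge\mathbf{1}_Q$ and $|\hat g(\xi)|\le C_K(1+|\xi|)^{-K}$ for a large $K$, one has $\int_Q|f|^2\le\int g|f|^2=\sum_{m,n}a_m\overline{a_n}\,\hat g(\lambda_n-\lambda_m)$, which Schur's test bounds by $\bigl(\sup_m\sum_n|\hat g(\lambda_n-\lambda_m)|\bigr)\sum_m|a_m|^2$. Since $\gamma$-separation gives $\#(\Lambda\cap B_r(x))\le(1+2r/\gamma)^n$, grouping the $\lambda_n$ into annuli of width $\gamma$ about $\lambda_m$ turns the inner sum into a series that converges once $K>n+1$, yielding $d_+$.

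The lower bound is the heart of the theorem, and it is here that $N(r)/r\to0$ is used. This hypothesis says the upper uniform (Beurling) density of $\Lambda$ is zero, hence strictly below the critical sampling density of the Paley--Wiener space $\mathrm{PW}_{B_\rho}=\{\hat F:F\in L^2(B_\rho)\}$ for \emph{every} $\rho>0$; by the interpolation part of Beurling--Landau theory this makes $\Lambda$ a set of interpolation with bounded right inverse, i.e.\ for every $c\in\ell^2$ there is $F\in L^2(B_\rho)$ with $\int_{B_\rho}F(x)e^{-i\lambda_m\cdot x}\dd x=c_m$ for all $m$ and $\|F\|_{L^2}\le C\|c\|_{\ell^2}$. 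Combined with the Bessel bound, this surjectivity-with-norm-control of the analysis map forces its adjoint, the synthesis map $(b_m)\mapsto\sum_m b_m e^{i\lambda_m\cdot x}$, to be bounded below on $L^2(B_\rho)$, which is exactly the bound $d_-$. To exhibit the interpolating functions one uses a generating function for $\Lambda$: for $n=1$ an infinite product $G(z)=\prod_m E_m(z/\lambda_m)$ with Weierstrass factors whose growth is controlled, via Jensen's formula, by $N(r)/r\to0$, so that $G_k(z)=G(z)\big/\bigl(G'(\lambda_k)(z-\lambda_k)\bigr)$ interpolates; for $n\ge2$ one uses the real-variable substitute --- a smooth multiplier equal to $1$ at $\lambda_k$, to $0$ on $\Lambda\setminus\{\lambda_k\}$, with Fourier transform supported in $B_\rho$ and $L^2$ norm bounded uniformly in $k$.

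The main obstacle is precisely this construction when $\rho$ is small, that is, for a small prescribed domain. The naive ansatz $F=\sum_m c_m e^{i\lambda_m\cdot x}h(x)$ with a single bump $h\in C^\infty_c(B_\rho)$ fails: it gives the error operator $E_{jk}=\hat h(\lambda_k-\lambda_j)$ whose norm does not tend to $0$ but grows, roughly like the number of $\lambda_k$ within distance $1/\rho$ of $\lambda_j$ --- of order $N(1/\rho)$ --- which is large for small $\rho$ no matter how $h$ is chosen. One must therefore use the sparsity of $\Lambda$ genuinely, through the growth control of the generating function supplied by $N(r)/r\to0$, to get interpolating functions whose Fourier support still fits inside $B_\rho$ while keeping the $\ell^2\to L^2$ bound. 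Once the two bounds hold on $B_\rho(0)$ and on $Q$, the two reductions transport them to the given bounded open $D$, so $D$ is a domain associated with $\Lambda$.
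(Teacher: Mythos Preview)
The paper does not prove this theorem at all: it is quoted verbatim as Theorem~8.4.5 from the reference \cite{observation} (Tucsnak--Weiss) and then applied as a black box to establish Proposition~\ref{ucp_lin}. There is therefore no ``paper's own proof'' to compare against; the paper treats it as an external ingredient.

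As for the content of your sketch, it is a reasonable outline of how such Beurling-type results are actually proved in the literature. The reduction to a cube for the upper bound and to a small ball for the lower bound is standard, and the Schur/decay argument for $d_+$ using only $\gamma$-separation is correct. Your identification of the lower bound with the interpolation property for the Paley--Wiener space $\mathrm{PW}_{B_\rho}$, and the observation that $N(r)/r\to 0$ forces the upper Beurling density to vanish (hence to fall below the critical density for \emph{every} $\rho$), is the right mechanism. Two caveats are worth flagging. First, the theorem as stated defines $N(r)$ via a supremum as $|x|\to\infty$, not over all $x$; you should check that regularity alone controls the contribution of any bounded region, so that the density condition ``at infinity'' really suffices. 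Second, in dimension $n\ge 2$ the passage from zero density to interpolation is not a one-line citation of Beurling--Landau: sufficient conditions for interpolation in several variables are genuinely more delicate than the necessary ones, and your ``real-variable substitute'' paragraph gestures at the construction without carrying it out. What you have is a correct road map rather than a proof, which is appropriate given that the paper itself simply imports the result.
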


We can now prove Proposition \ref{ucp_lin}. 
\begin{proof}
Let $\Lambda = \{\lambda_k\}_{k\in \mathbb{Z}}$ with $\lambda_k = (k,\omega(k))$. Then 
$$ u(x,t) = \sum_{k\in \mathbb{Z}} \hat{g}_k e^{i \lambda_k \cdot(x,t)}.$$ 
Since
$$ \underset{\substack{m,n \in \mathbb{Z} \\ m\neq n}}\inf|(m,\omega(m)) - (n,\omega(n))| \geq 1,$$
the set $\Lambda $ is regular. 
We define $\Lambda_+ = \Lambda_{k \geq 0}$ and $\Lambda_- = \Lambda\setminus \Lambda_+$, and the functions 
$$\tilde{N}_\pm(|x|,r) = \sup_{ |y| = |x| } \frac{\#(\Lambda_\pm \cap B_r(y))}{r}.$$
Next, consider the annulus $\mathcal{A}(|x|,r) = \{ y \in \mathbb{R}^2:  |x| -r \leq  |y| \leq |x| + r \}$. 
For any fixed  $|x|$ and $r$, it is clear that 
$$ \tilde{N}_+(|x|,r) \leq \frac{\#(\Lambda_+ \cap \mathcal{A}(|x|,r))}{r}. $$
Assume now that $\omega(k) > 0$ for $k > 0$. Let $(k_-, \omega(k_-))$ and $(k_+, \omega(k_+)) $ be the coordinates of intersection with the inner and outer boundaries of $\mathcal{A}(|x|,r)$, respectively. We then have that $ \#(\Lambda_+ \cap \mathcal{A}(|x|,r)) \leq 1 + (k_+ - k_-)$. For $k$ large enough, $\omega(k) > k$, and the difference ${\omega(k_+) - \omega(k_-)}$ is less than the length $D$ of the longest vertical line segment  contained in ${\mathcal{A}(|x|,r) \cap \{ (y_1,y_2) \in \mathbb{R}^2 : , y_1 \geq 0, y_2 \geq y_1\}}$, i.e., in the intersection of the annulus and the points above the line $y = k$. Figure \ref{fig1} depicts the setting.  
The vertical distance between the inner and outer boundary of $\mathcal{A}$ is  
$$ d(k) = \sqrt{(|x| +r)^2 - k^2} - \sqrt{(|x| -r)^2 - k^2} $$ 
and since $d'(k) \geq 0$, we find that $D = \sqrt{(|x| +r)^2 - \frac{(|x| -r)^2}{2}} - \sqrt{\frac{(|x| -r)^2}{2}}$.
\begin{figure}
    \centering
    \includegraphics[width = 0.8\textwidth]{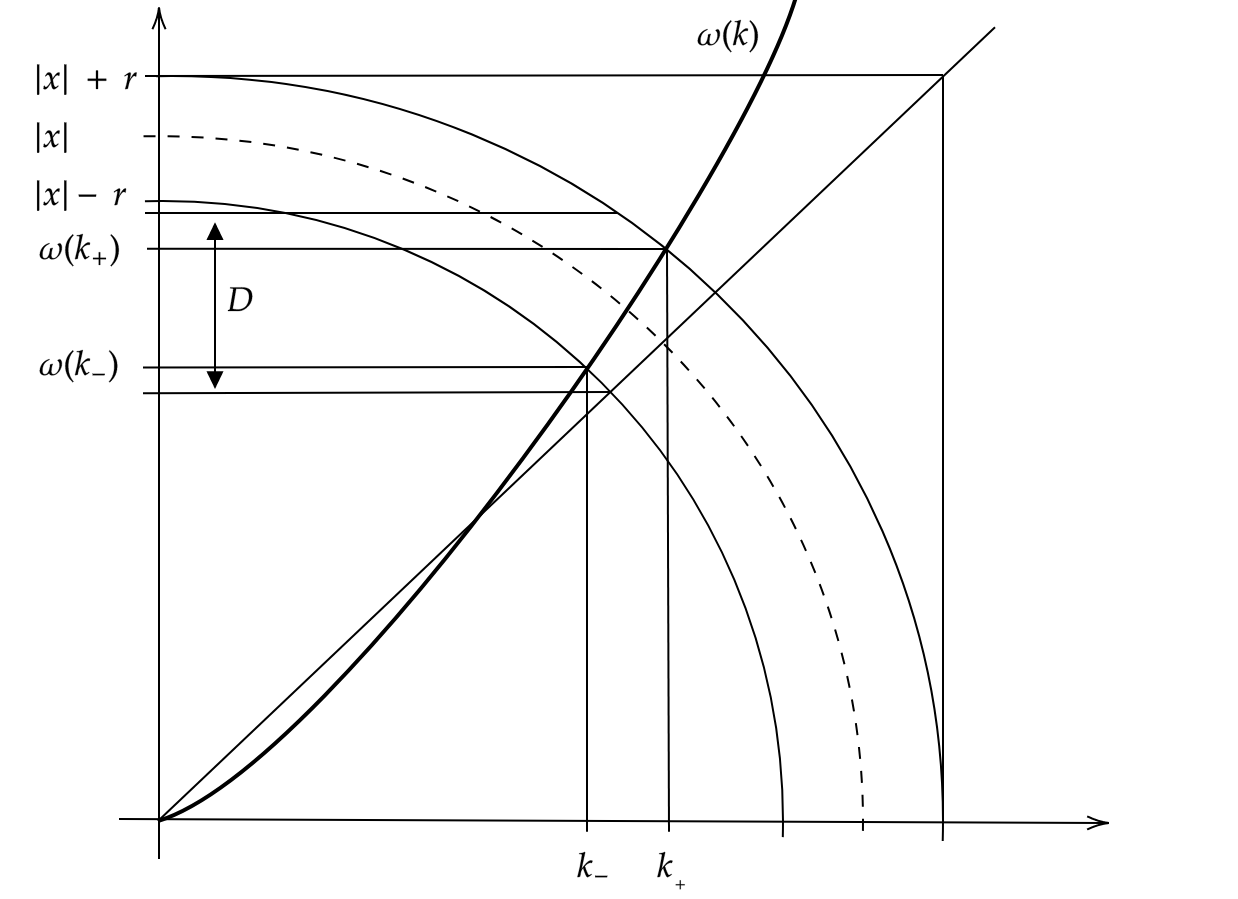}
    \caption{}
    \label{fig1}
\end{figure}
Further calculation then shows that $D$ is an increasing function of $|x|$ and that $\lim_{|x| \to \infty} D = \sqrt{8}r.$
By the mean value theorem, there is some ${\tilde{k} \in (k_-,k_+)}$ such that 
$$ k_+ - k_- = \frac{\omega(k_+) - \omega(k_-)}{\omega'(\tilde{k})} \leq \frac{\sqrt{8}r}{\omega'(\tilde{k})}. $$
For each $r$ we therefore have 
$$ \tilde{N}_+(|x|,r) \leq  1/r + \frac{\sqrt{8}}{\omega'(\tilde{k})}. $$
The value $k_-$ must be an increasing function of $|x|$, and since 
$\tilde{k} \geq k_-$, we have $\omega'(\tilde{k}) \to \infty$ as $|x| \to \infty$. Consequently $$\lim_{|x| \to \infty} \tilde{N}_+(|x|,r) = 1/r. $$
We now calculate $\lim_{r \to \infty} \tilde{N}_{+} (|x|,r).$ Let $\mathcal{R}(|x|,r)$ be a square in the first quadrant with its lower left corner at the origin, and side length $|x| + r$. For fixed $|x|$ large enough, we have that $$ \tilde{N}_+(|x|,r) \leq  \#(\Lambda_+ \cap \mathcal{R}(|x|,r))  \leq \omega^{-1}(|x| + r).$$ Since $\omega$ is superlinear, $\omega^{-1}$ is sublinear. Therefore  
$$\lim_{r \to \infty} \tilde{N}_+(|x|,r) \leq \lim_{r \to \infty}  \frac{\omega^{-1}(|x| + r)}{r} = 0.$$
We now have that 

\begin{equation}
\begin{split}
\lim_{|x| \to \infty } \lim_{r \to \infty} \tilde{N}_+(|x|,r) &= 0, \\
\lim_{r \to \infty } \lim_{|x| \to \infty} \tilde{N}_+(|x|,r) & = 0. 
\end{split}
\label{N+}
\end{equation}
By the same argument, \eqref{N+} also holds for $\tilde{N}_-$. We now have that 
$$ \lim_{r\to \infty} N(r)/r \leq \lim_{(r,|x|) \to \infty } (\tilde{N}_-(|x|,r) + \tilde{N}_-(|x|,r)), $$
and due to \eqref{N+}, we can conclude that 
\begin{align*}
\lim_{(r,|x|) \to \infty } (\tilde{N}_-(|x|,r) + \tilde{N}_-(|x|,r)) &= \lim_{|x| \to \infty } \lim_{r \to \infty}(\tilde{N}_-(|x|,r) + \tilde{N}_-(|x|,r)) \\
& = \lim_{r \to \infty } \lim_{|x| \to \infty}(\tilde{N}_-(|x|,r) + \tilde{N}_-(|x|,r)) \\
& = 0
\end{align*} 
Hence the requirements of Theorem \ref{tucsnak} are satisfied, and we can conclude that any open set $\mathcal{O} \subset (0,2\pi) \times \{ t \in \mathbb{R}: t >0\}$ is a domain associated with $\Lambda$. 
Therefore, if $u(x,t)|_{\mathcal{O}} = 0$, there is some $d_- > 0$  such that 
$$ 0 = \int_{\mathcal{O}} |u(x,t)|^2 \mathrm{d}x\mathrm{d}t \geq d_-\sum_{k \in \mathbb{Z}} |g_k|^2 = d_-\|u(\cdot,t)\|_{L^2}. $$

\end{proof}

Let us end by concluding that the examples mentioned (Schrödinger, linear KdV and gravity-capillary waves) enjoy the unique continuation property, as their dispersion relations all satisfy the asymptotic constraint in Proposition \ref{lin disp}. However, for the gravity-capillary waves, the inclusion of the capillary force, manifested by the term $Sk^3$, is needed for the asymptotic constraint to hold, in contrast to the proof in for the full water waves system. 

Moreover, the characterization of unique continuation in terms of the dispersion relation is interesting in that relates the asymptotically infinite propagation speed to the non-local behaviour of the solution; the implication that zero-sets of the solution cannot be open sets is similar to the behaviour of for example the heat equation, where the solution is everywhere positive and the speed of propagation is infinite.

\printbibliography

@book{waterwavesprob,
title = {The water waves problem: mathematical analysis and asymptotics},
author = {David Lannes},
year = {2013},
publisher = {American Mathematical Society}

}

@book{craig2018course,
  title={A course on partial differential equations},
  author={Craig, Walter},
  volume={197},
  year={2018},
  publisher={American Mathematical Soc.}
}

@book{ablowitz2011nonlinear,
  title={Nonlinear dispersive waves: asymptotic analysis and solitons},
  author={Ablowitz, Mark J},
  volume={47},
  year={2011},
  publisher={Cambridge University Press}
}

@book{young2001introduction,
  title={An introduction to non-Harmonic fourier series, revised edition, 93},
  author={Young, Robert M},
  year={2001},
  publisher={Elsevier}
}

@article{tenenbaum2009fast,
  title={Fast and strongly localized observation for the Schr{\"o}dinger equation},
  author={Tenenbaum, Gerald and Tucsnak, Marius},
  journal={Transactions of the American Mathematical Society},
  volume={361},
  number={2},
  pages={951--977},
  year={2009}
}

@book{observation,
  title={Observation and control for operator semigroups},
  author={Tucsnak, Marius and Weiss, George},
  year={2009},
  publisher={Springer Science \& Business Media}
}

@book{choulli2016applications,
  title={Applications of elliptic Carleman inequalities to Cauchy and inverse problems},
  author={Choulli, Mourad},
  volume={5},
  year={2016},
  publisher={Springer}
}

@article{zhang1992unique,
  title={Unique continuation for the Korteweg--de Vries equation},
  author={Zhang, Bingyu},
  journal={SIAM journal on mathematical analysis},
  volume={23},
  number={1},
  pages={55--71},
  year={1992},
  publisher={SIAM}
}

@article{kirkeby2023feynman,
  title={Feynman's inverse problem},
  author={Kirkeby, Adrian},
  journal={arXiv preprint arXiv:2310.15589},
  year={2023}
}

@article{baiocchi2002ingham,
  title={Ingham-Beurling type theorems with weakened gap conditions},
  author={Baiocchi, Claudio and Komornik, Vilmos and Loreti, Paola},
  journal={Acta Mathematica Hungarica},
  volume={97},
  pages={55--95},
  year={2002},
  publisher={Springer}
}

@article{ehrnstrom2006unique,
  title={A unique continuation principle for steady symmetric water waves with vorticity},
  author={Ehrnstr{\"o}m, Mats},
  journal={Journal of Nonlinear Mathematical Physics},
  volume={13},
  number={4},
  pages={484--491},
  year={2006},
  publisher={Atlantis Press}
}

@article{alazard2014cauchy,
  title={On the Cauchy problem for gravity water waves},
  author={Alazard, Thomas and Burq, Nicolas and Zuily, Claude},
  journal={Inventiones mathematicae},
  volume={198},
  number={1},
  pages={71--163},
  year={2014},
  publisher={Springer}
}

@article{linares2022unique,
  title={On unique continuation for non-local dispersive models},
  author={Linares, Felipe and Ponce, Gustavo},
  journal={Vietnam Journal of Mathematics},
  pages={1--27},
  year={2022},
  publisher={Springer}
}

@article{kenig2020unique,
  title={On the unique continuation of solutions to non-local non-linear dispersive equations},
  author={Kenig, Carlos E and Pilod, Didier and Ponce, Gustavo and Vega, Luis},
  journal={Communications in Partial Differential Equations},
  volume={45},
  number={8},
  pages={872--886},
  year={2020},
  publisher={Taylor \& Francis}
}

@article{kenig2020uniqueness,
  title={Uniqueness properties of solutions to the Benjamin-Ono equation and related models},
  author={Kenig, Carlos E and Ponce, Gustavo and Vega, Luis},
  journal={Journal of Functional Analysis},
  volume={278},
  number={5},
  pages={108396},
  year={2020},
  publisher={Elsevier}
}

@article{ruland2015unique,
  title={Unique continuation for fractional Schr{\"o}dinger equations with rough potentials},
  author={R{\"u}land, Angkana},
  journal={Communications in Partial Differential Equations},
  volume={40},
  number={1},
  pages={77--114},
  year={2015},
  publisher={Taylor \& Francis}
}

@article{garcia2020two,
  title={On two methods for quantitative unique continuation results for some nonlocal operators},
  author={Garc{\'\i}a-Ferrero, Mar{\'\i}a {\'A}ngeles and R{\"u}land, Angkana},
  journal={Communications in Partial Differential Equations},
  volume={45},
  number={11},
  pages={1512--1560},
  year={2020},
  publisher={Taylor \& Francis}
}

@book{dagmar2018laplace,
  title={LAPLACE EQUATION: Boundary Value Problems on Bounded and Unbounded Lipschitz Domains},
  author={Dagmar. Medkov},
  year={2018},
  publisher={SPRINGER}
}

@article{ghosh2020calderon,
  title={The Calder{\'o}n problem for the fractional Schr{\"o}dinger equation},
  author={Ghosh, Tuhin and Salo, Mikko and Uhlmann, Gunther},
  journal={Analysis \& PDE},
  volume={13},
  number={2},
  pages={455--475},
  year={2020},
  publisher={Mathematical Sciences Publishers}
}

\end{document}